\begin{document}
\pagestyle{plain}
\title{Bipartite powers of $k$-chordal graphs}
\author{L. Sunil Chandran\and Rogers Mathew}
\institute{Department of Computer Science and Automation, \\ 
Indian Institute of Science, Bangalore -- 560012, India.\\
\texttt{\{sunil,rogers\}@csa.iisc.ernet.in}}
\maketitle
\bibliographystyle{plain}
\def\remarkname{Observation}
\newcounter{line}
\newcommand{\MYSTATE}[1]{\STATE{\refstepcounter{line}\theline. #1}}
\newcommand{\bbox}{\rule{0.6em}{0.6em}}

\begin{abstract}
Let $k$ be an integer and $k \geq 3$. A graph $G$ is \emph{$k$-chordal} if $G$ does not have an induced cycle of length greater than $k$. From the definition it is clear that $3$-chordal graphs are precisely the class of chordal graphs. 
Duchet proved that, for every positive integer $m$, if $G^m$ is chordal then so is $G^{m+2}$. Brandst\"adt et al. in [Andreas Brandst\"adt, Van Bang Le, and Thomas Szymczak. Duchet-type theorems for powers of HHD-free graphs. \emph{Discrete Mathematics}, 177(1-3):9-16, 1997.] showed that if $G^m$ 
is $k$-chordal, then so is $G^{m+2}$.

Powering a bipartite graph does not preserve its bipartitedness. In order to preserve the bipartitedness of a bipartite graph while powering Chandran et al. 
introduced the notion of \emph{bipartite powering}. This notion was introduced to aid their study of boxicity of chordal bipartite graphs. Given a bipartite graph $G$ and an odd positive integer $m$, we define the graph $G^{[m]}$ to be a bipartite graph with $V(G^{[m]})=V(G)$ and $E(G^{[m]})=\{(u,v)
~|~ u,v \in V(G), d_G(u,v)\mbox{ is odd, and } d_G(u,v)\leq m\}$. The graph $G^{[m]}$ is called the \emph{$m$-th bipartite power} of $G$. 

In this paper we show that, given a bipartite graph $G$, if $G$ is $k$-chordal then so is $G^{[m]}$, where $k$, $m$ are positive integers such that $k\geq 4$ and $m$ is odd.  

\medskip\noindent\textbf{Key words: }
$k$-chordal graph, hole, chordality, graph power, bipartite power.
\end{abstract}
\section{Introduction}
A \emph{hole} is a chordless (or an induced) cycle in a graph. The \emph{chordality} of a graph $G$, denoted by $\mathcal{C}(G)$, is defined to be the size of a largest hole in $G$, if there exists a cycle in $G$. If $G$ is acyclic, then its chordality is taken as $0$. A graph $G$ is \emph{$k$-chordal} if $\mathcal{C}(G) \leq k$. In other words, a graph is $k$-chordal if it has no holes with more than $k$ vertices in it. Chordal graphs are exactly the class of $3$-chordal graphs and chordal bipartite graphs are bipartite, $4$-chordal graphs.  
$k$-chordal graphs 
have been studied in the literature in \cite{BodThil}, \cite{Sun5}, \cite{CSL}, \cite{YonDour}, 
\cite{Dragan20051} and \cite{Spinrad1991227}. For example, Chandran and Ram \cite{Sun5}
proved that the number of minimum cuts in a $k$-chordal graph is at most $\frac{(k+1)n}{2}-k$.  
Spinrad\cite{Spinrad1991227} showed that $(k-1)$-chordal graphs can be recognized in 
$O(n^{k-3}M)$ time, where $M$ is the time required to multiply two $n$ by $n$ matrices.

Powering and its effects on the chordality of a graph has been a topic of interest. 
The $m$-th power of a graph $G$, denoted by $G^m$, is a graph with vertex set
$V(G^m) = V(G)$ and edge set $E(G^m)=\{(u,v)~|~u \neq v, d_G(u,v) \leq m\}$, where 
$d_G(u,v)$ represents the distance between $u$ and $v$ in $G$. 
Balakrishnan and Paulraja \cite{BalPaul1} proved that odd powers of chordal graphs are chordal.
Chang and Nemhauser \cite{changNemhauser} showed that if $G$ and $G^2$ are chordal then so are
all powers of $G$. Duchet \cite{Duchet} proved a stronger result which says that if $G^m$ is 
chordal then so is $G^{m+2}$. Brandst\"adt et al. in \cite{BrandPower} showed that if $G^m$ is $k$-chordal then so is $G^{m+2}$, where $k\geq 3$ is an integer. Studies on families of graphs that are closed under powering can also be seen in the literature. For instance, it is known that interval graphs, 
proper interval graphs \cite{Raychauduri2}, strongly chordal graphs \cite{Lub1}, circular-arc graphs \cite{Raychauduri}\cite{Flotow3}, cocomparability 
graphs \cite{Flotow2} etc. are closed under taking powers. 

Subclasses of bipartite graphs, like chordal bipartite graphs, are not closed under powering since the $m$-th power of a bipartite graph need not be even bipartite. Chandran et al. in \cite{SunMatRog} introduced the notion of \emph{bipartite powering} to retain the 
bipartitedness of a bipartite graph while taking power. 
Given a bipartite graph $G$ and an odd positive integer $m$, $G^{[m]}$ is a bipartite graph with $V\left(G^{[m]}\right)=V(G)$ and
$E\left(G^{[m]}\right)=\{(u,v) ~|~ u,v\in V(G), d_G(u,v)\mbox{ is odd, and } 
d_G(u,v)\leq m\}$. The graph $G^{[m]}$ is called the \emph{$m$-th bipartite power} of $G$. It was shown in \cite{SunMatRog} that, for every odd positive integer $m$, the $m$-th bipartite power of a tree is chordal bipartite. The intention there was to construct chordal bipartite graphs of high boxicity. The fact that the chordal bipartite graph under consideration was obtained as a  bipartite power of a tree was crucial for proving that its boxicity was high. Since trees are a subclass of chordal bipartite graphs, a natural question that came up was the following: is it true that the $m$-th bipartite power of  every chordal bipartite graph is chordal bipartite? In this paper we answer this question in the affirmative. In fact, we prove a more general result. 
\subsubsection{Our Result}
Let $m$, $k$ be positive integers such that $m$ is odd and $k \geq 4$. Let $G$ be a bipartite graph.   
If $G$ is $k$-chordal, then so is $G^{[m]}$.  Note that the special case when $k=4$ gives us the following result: chordal bipartite graphs are closed under bipartite powering. 

\section{Graph Preliminaries}
Throughout this paper we consider only finite, simple, undirected graphs. 
For a graph $G$, we use $V(G)$ to denote the set of vertices of $G$. Let $E(G)$ denote its edge set. For every $x,y \in V(G)$, $d_G(x,y)$ represents the distance between $x$ and $y$ in $G$. 
For every $u \in V(G)$, $N_G(u)$ denotes its \emph{open neighborhood} in $G$, i.e. $N_G(u) = \{v~|~(u,v)\in E(G)\}$. 
A path $P$ on the vertex set $V(P) = \{v_1,v_2,\ldots,v_n\}$ (where $n\geq 2$) has its edge set $E(P) = \{(v_i,v_{i+1})~|~1 \leq i \leq n-1\}$. 
Such a path is denoted by $v_1 v_2 \ldots v_n$. If $v_i,v_j\in V(P)$, $v_iPv_j$ is the path $v_i v_{i+1}\ldots v_{j}$. 
%
The length of a path $P$ is the number of edges in it and is denoted by $||P||$. 
A cycle $C$ with vertex set $V(C)=\{v_1,v_2,\ldots,v_n\}$, and edge set $E(C)=\{(v_i,v_{i+1}) ~|~1\leq i\leq n-1\}\cup \{(v_n,v_1)\}$ is denoted as $C = v_1 v_2 \ldots v_n v_1$. We use $||C||$ to denote the length of cycle $C$. 

\section{Holes in Bipartite Powers}
Let $H$ be a bipartite graph. Let $\mathcal{B}(H)$ be a family of graphs constructed from $H$ in the following manner: $H' \in \mathcal{B}(H)$ if corresponding to each vertex $v \in V(H)$ there exists a nonempty bag of vertices, say $B_v$, in $H'$ such that (a) for every $x \in B_u$, $y \in B_v$, $(x,y) \in E(H')$ if and only if $(u,v) \in E(H)$, and (b) vertices within each bag in $H'$ are pairwise non-adjacent. Below we list a few observations about $H$ and every $H'$ (, where $H' \in \mathcal{B}(H)$): 
\begin{remark}
\label{observation1}
$H'$ is bipartite. 
\end{remark}
\begin{remark}
\label{observation2}
$H$ is an induced subgraph of $H'$. 
\end{remark}
\begin{remark}
\label{observation3}
Let $k$ be an integer such that $k \geq 4$. If $H$ is $k$-chordal, then so is $H'$. 
\end{remark}
\begin{proof}
Any hole of size greater than $4$ in $H'$ cannot have more than one vertex from the same bag, say $B_v$, as such vertices have the same neighborhood. Hence, the vertices of a hole (of size greater than $4$) in $H'$ belong to different bags and thus there is a corresponding hole of the same size in $H$. 
\end{proof}

\begin{theorem}\label{bippowtheorem}
Let $m$, $k$ be positive integers such that $m$ is odd and $k \geq 4$. Let $G$ be a bipartite graph.   
If $G$ is $k$-chordal, then so is $G^{[m]}$.   
\end{theorem}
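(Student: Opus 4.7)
I would argue by contradiction, using a minimum counterexample. Assume $G^{[m]}$ has a hole of length greater than $k$ and fix such a hole $C = v_0 v_1 \cdots v_{n-1} v_0$ with $n > k$ minimum; because $G^{[m]}$ is bipartite, $n$ is even, so $n \ge 6$. For each edge $v_i v_{i+1}$ of $C$ fix a shortest $v_i v_{i+1}$-path $P_i$ in $G$, which by the definition of bipartite power has odd length $\ell_i \le m$; subject to $C$ as above, choose the $P_i$'s so that $\sum_i \ell_i$ is as small as possible.

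The goal is to promote $C$ to an induced cycle in $G$ itself of length greater than $k$, contradicting the $k$-chordality of $G$. Concretely I would first clean up the $P_i$'s so that they are pairwise internally vertex-disjoint, and no internal vertex of any $P_i$ coincides with a cycle vertex $v_j$ other than its own endpoints. Granted this, the concatenation $C^\ast := P_0 P_1 \cdots P_{n-1}$ is a cycle in $G$ of length $L = \sum_i \ell_i \ge n > k$. No chord of $C^\ast$ can have both endpoints inside a single $P_i$, since $P_i$ is a shortest path in $G$. A chord joining distinct $P_i$ and $P_j$ would, by analysing how it shortcuts the walk around $C^\ast$ and using bipartiteness of $G$ to control parities, either contradict the minimality of $\sum_s \ell_s$ (by allowing a strictly shorter choice of some $P_s$) or the minimality of $n$ (by producing a strictly shorter induced cycle in $G^{[m]}$ whose length still exceeds $k$). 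Therefore $C^\ast$ is induced in $G$, giving the required contradiction.

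The main obstacle is the clean-up step. Suppose $P_i$ and $P_j$ share an internal vertex $w$. Splicing subpaths of $P_i$ and $P_j$ at $w$ produces $G$-walks between various pairs of cycle vertices $v_s, v_t$ of length at most $\max(\ell_i, \ell_j) \le m$. When $v_s$ and $v_t$ lie on opposite sides of the bipartition of $G$, the underlying $G$-distance is odd and at most $m$, which directly gives an edge of $G^{[m]}$ between non-consecutive vertices of $C$, contradicting that $C$ is induced. When $v_s$ and $v_t$ lie on the same side, the splice does not yield a chord in $G^{[m]}$, and one must instead show that the splice identifies a strictly shorter induced sub-cycle of $C$ in $G^{[m]}$ whose length still exceeds $k$, contradicting the minimality of $n$. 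Managing this parity-sensitive case analysis, and verifying the length bound in the even-parity case, is the most delicate part of the proof. Observation \ref{observation3} fits naturally here: by replacing repeated occurrences of the same $G$-vertex along the cycle by independent copies sitting in a common bag, one stays within a $k$-chordal bipartite graph in $\mathcal{B}(G)$, which lets one treat the paths as genuinely disjoint without creating any new holes in the process.
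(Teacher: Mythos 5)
Your setup matches the paper's: argue by contradiction from a hole $C$ of length $p>k$ in $G^{[m]}$, pick shortest paths $P_i$ in $G$ between consecutive hole vertices, and use the bag construction of Observation~\ref{observation3} to make the paths internally disjoint without affecting chordality. The argument ruling out chords of the concatenated cycle $C^\ast$ with both endpoints on one $P_i$, and (implicitly) chords between $P_i$ and $P_j$ that are far apart around the circle, is also sound --- the latter is exactly the paper's Claim~\ref{Ledgeclaim}, proved by pairing up the two ``crossing'' distances and using that a non-edge of $H'^{[m]}$ across the bipartition forces $G$-distance at least $m+2$.

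The genuine gap is your treatment of chords between \emph{nearby} paths, i.e.\ between $P_i$ and $P_j$ with $j-i\in\{1,2\}$ cyclically (the paper's $1$-edges and $2$-edges). Such a chord does not contradict the minimality of $\sum_s \ell_s$: rerouting $P_i$ through the chord gives a $v_i$--$v_{i+1}$ walk of length $a+1+c\ge \ell_i$ precisely because $P_i$ was already shortest, so nothing shortens. It does not contradict the minimality of $n$ or the inducedness of $C$ either: the only hole vertices it could connect by a short path are at cyclic distance $1$ or $2$ on $C$, and at distance $2$ they lie on the same side of the bipartition of $G^{[m]}$, so no forbidden edge of $G^{[m]}$ arises no matter how short the detour is. Consequently $C^\ast$ can genuinely fail to be induced, and your proof has no mechanism left to produce a long hole in $G$. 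This is where essentially all of the paper's work lives: it abandons $C^\ast$ and instead greedily builds a different cycle $C'$ in $H'$ by repeatedly taking ``farthest neighbours'' (Algorithm~3.1), proves $C'$ is induced (Claim~\ref{inducedcycleclaim}), and then --- the delicate part --- proves $\|C'\|\ge p$ by showing that every skipped path $Q_j$ is compensated by a $0$-edge, i.e.\ a path contributing two vertices (Claims~\ref{cycleSizeClaim}--\ref{pathContribClaim}). You correctly flag the even-parity/length-bound case as ``the most delicate part,'' but the proposal offers no idea for resolving it, and the minimality devices you introduce do not suffice; some replacement for the paper's cycle-construction-plus-counting argument is required.
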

\begin{proof}
We prove this by contradiction. Let $p$ denote the size of a largest induced cycle, say $C = u_0 u_1 \ldots  u_{p-1} u_0$, in $G^{[m]}$. 
Assume $p > k$. Then, $p \geq 6$ (since $k \geq 4$ and $G^{[m]}$ is bipartite).
Between each 
$u_{i-1}$ and $u_i$, where $i \in \{0, \ldots, p-1\}$, there exists a shortest path of length not more than $m$ in 
$G$ \footnotemark[1]. 
Let $P_i$ be one such shortest path between $u_{i-1}$ and $u_i$ in $G$. 

Let $H$ be the subgraph induced on the vertex set $\bigcup_{i=0}^{p-1}V(P_i)$ in $G$. As mentioned in the beginning of this section, construct a graph $H'$ from $H$, where $H' \in \mathcal{B}(H)$, in the following manner: for each $v \in V(H)$, let $|B_v| = |\{P_i~|~0\leq i \leq p-1, v \in V(P_i)\}|$ i.e., let $B_v$ have as many vertices as the number of paths in $\{P_0 \ldots P_{p-1}\}$ that share vertex $v$ in $H$. 
For each $i \in \{0, \ldots , p-1\}$, let $Q_i' = u_{i-1}Q_i$ be a shortest path between $u_{i-1}$ and $u_i$ in $H'$ such that no two paths $Q_i$ and $Q_j$ (where $i \neq j$) share a vertex \footnotemark[1] 
\footnotetext[1]{throughout this proof expressions involving subscripts of $u$, $P$, $Q$, and $Q'$ are to be taken modulo $p$. Every such expression should be evaluated to a value in $\{0, \ldots , p-1 \}$. For example, consider a vertex $u_i$, where $i < p$
Then, $p + i = i$.}. 
From our construction of $H'$ from $H$ it is easy to see that such  paths exist. Let $Q_i = v_{i,1} v_{i,2} \ldots v_{i,r_i}u_i$, where $r_i = ||Q_i|| \geq 0$. Thus, $Q_i' = u_{i-1}v_{i,1} v_{i,2} \ldots v_{i,r_i}u_i$. 
Clearly, $||Q_i'|| = ||P_i|| \leq m$. The reader may also note that the cycle $C$ ($ = u_0 u_1 \ldots  u_{p-1} u_0$) which is present in $G^{[m]}$ will be present in $H^{[m]}$ and thereby in $H'^{[m]}$ too.  

In order to prove the theorem, it is enough to show that there exists an induced cycle of size at least $p$ in $H'$. Then by combining Observation \ref{observation3} and the fact that $H$ is an induced subgraph of $G$, we get $k \geq \mathcal{C}(G) \geq \mathcal{C}(H) \geq \mathcal{C}(H') \geq p$ contradicting our assumption that $p > k$. Hence, in the rest of the proof we show that $\mathcal{C}(H') \geq p$.

Consider the following drawing of the 
graph $H'$. Arrange the vertices $u_0, u_1,$  $\ldots, u_{p-1}$ in that order 
on a circle in clockwise order. 
Between each $u_{i-1}$ and $u_{i}$ on  the circle arrange the vertices $v_{i,1}, v_{i,2}, \ldots , v_{i,r_i}$ in that order in clockwise order. Recall that these vertices are the internal vertices of path $Q_i'$. 
\begin{remark}
\label{neighborObv}
In this circular arrangement of vertices of $H'$, each vertex has an edge (in $H'$) with both its left neighbor and right neighbor in the arrangement.  
\end{remark}

Let $x_1, x_2 \in V(H')$, where $x_1 \in V(Q_i)$, $x_2 \in V(Q_j)$. We define the \emph{clockwise distance from $x_1$ to $x_2$}, denoted by $clock\_dist(x_1,x_2)$, as the minimum non-negative integer $s$ such that $j = i + s$. Similarly, the \emph{clockwise distance from $x_2$ to $x_1$}, denoted by $clock\_dist(x_2,x_1)$, is the minimum non-negative integer $s'$ such that $i = j+s'$. Let $x,y,z \in V(H')$.  
We say $y<_x z$ if scanning the vertices of $H'$ in 
clockwise direction along the circle starting from $x$, vertex $y$ is encountered before $z$.  
Let $x \in V(Q_i)$. Vertex $y$ is called the \emph{farthest neighbor of $x$ before $z$} if $y \in N_{H'}(x)$, $y \in V(Q_{i}) \cup V(Q_{i+1}) \cup V(Q_{i+2})$, $y <_x z$, and for every other $w \in N_{H'}(x)$  either $z <_x w$ or $w \notin V(Q_{i}) \cup V(Q_{i+1}) \cup V(Q_{i+2})$ or both. 
\begin{remark}
\label{farthestNeighborObv} 
There always exists a vertex which is the farthest neighbor of $x$ before $z$, unless $(x,z) \in E(H')$ and $z \in V(Q_{i}) \cup V(Q_{i+1}) \cup V(Q_{i+2})$.  
\end{remark}

Let $\{A,B\}$ be the bipartition of the bipartite graph $H'$. We categorize the edges of $H'$ as follows: an edge $(x,y) \in E(H')$ 
is called an \emph{$l$-edge}, if $l = \min(clock\_dist(x,y),clock\_dist(y,x) )$. 
\begin{figure}[H]
\centerline{\input{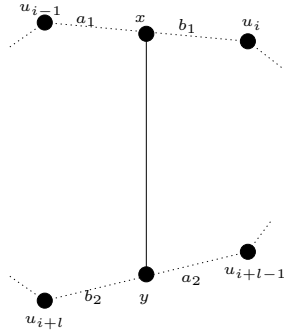}}
\caption{$x \in V(Q_i), y \in V(Q_{i+l})$ and let $(x,y) \in E(H')$ be an $l$-edge, where $l > 2$. The dotted line between $u_{i-1}$ and $u_i$ indicate the path $Q_i$. Similarly, the dotted line between $u_{i+l-1}$ and $u_{i+l}$ indicate the path $Q_{i+l}$.}
\label{l-edgeFig}
\end{figure}

\begin{claim}
\label{Ledgeclaim}
$H'$ cannot have an $l$-edge, where $l>2$.
\end{claim}
\begin{proof}
Suppose $H'$ has an $l$-edge, where $l > 2$, between $x \in Q_i$ and $y \in Q_{i+l}$ (see Fig. \ref{l-edgeFig}). Let $a_1 = ||u_{i-1}Q_i'x||,~  b_1 = ||xQ_i'u_i||,~ a_2 = ||u_{i+l-1}Q_{i+l}'y||$ and $ b_2 = ||yQ_{i+l}'u_{i+l}||$. We consider the following two cases: 
\begin{case}
 $l$ is even.
\end{case}
In this case $u_{i-1}$ and $u_{i+l-1}$ will be on the same side of the bipartite graph $H'$. Without loss of generality, let $u_{i-1}, u_{i+l-1} \in A$. Then, $u_{i}, u_{i+l} \in B$. 
We know that, for every $w_1, w_2 \in V(H'^{[m]})$ with $w_1 \in A$ and $w_2 \in B$, if $(w_1,w_2) \notin E(H'^{[m]})$ then $d_{H'}(w_1,w_2) \geq m+2$ (recalling $m$ and $d_{H'}(w_1,w_2)$ are odd integers).  
Therefore, we have 
$a_1 + 1 + b_2 \geq d_{H'}(u_{i-1},u_{i+l}) \geq m+2$.  
Similarly, $b_1 + 1 + a_2 \geq d_{H'}(u_{i},u_{i+l-1}) \geq m+2$. Summing up the two inequalities we get, $(a_1 + b_1) + (a_2 + b_2) \geq 2m + 2$. This implies that either $||Q_i'||$ or $||Q_{i+l}'||$ is greater  than $m$ which is a contradiction. 
\begin{case}
 $l$ is odd (proof is similar to the above case and hence omitted).
\end{case}
Hence we prove the claim. 
\end{proof}

We find a cycle $C' = z_0z_1\ldots z_qz_0$ in $H'$ using Algorithm $3.1$ \footnotemark[1]. Please read the algorithm before proceeding further.   
\footnotetext[1]{throughout this proof expressions involving subscripts of $z$ are to be taken modulo $q+1$. Every such expression should be evaluated to a value in $\{0, \ldots , q \}$. For example, consider a vertex $z_a$, where $a < q+1$.  Then, $q+1+a = a$.}. 

\begin{algorithm}[H]
\begin{algorithmic}
\label{cycle-construction}
\caption{Finding Cycle $C'$ in $H'$ such that $||C'|| \geq ||C||$}
\setcounter{line}{0}
\MYSTATE{$l \leftarrow \max_{l'}(H' \mbox{has an }l'\mbox{-edge})$. Without loss of generality assume that  this $l$-edge is between a vertex in $Q_0$ and a vertex in $Q_{l}$}
\MYSTATE{Scan the vertices of $Q_0$ in clockwise direction to find the first vertex $z_0$, where $z_0 \in V(Q_0)$, which has an $l$-edge to a vertex in $Q_{l}$.}
\MYSTATE{Scan the vertices of $Q_{l}$ in clockwise direction to find the last vertex in $Q_l$ which is a neighbor of $z_0$ in $H'$. Call it $z_1$.}
\MYSTATE{Find the farthest neighbor of $z_1$ before $z_0$. Call it $z_2$.} /* refer proof of Claim \ref{farthestNeighborClaim} for a proof of existence of such a $z_2$*/
\MYSTATE{$s \leftarrow 2$.}
\WHILE{$(z_s,z_0) \notin E(H')$}
\MYSTATE{Find the farthest neighbor of $z_s$ before $z_0$. Call it $z_{s+1}$.} /* such a neighbor exists by Observation \ref{farthestNeighborObv}*/
\MYSTATE{ $s \leftarrow s + 1$.}
\ENDWHILE
\MYSTATE{ $q \leftarrow s$.}
\MYSTATE{ Return cycle $C' = z_0 z_1\ldots z_{q} z_0$.}
\end{algorithmic}
\end{algorithm}

\begin{claim}
\label{farthestNeighborClaim}
There always exists a farthest neighbor of $z_1$ before $z_0$. 
\end{claim}
\begin{proof}
Note that $z_0 \in Q_0$ and $z_1 \in Q_l$, where $l \leq 2$ (by Claim \ref{Ledgeclaim}). Recalling that $||C|| = p \geq 6$, we have $z_0 \notin  V(Q_{l}) \cup V(Q_{l+1}) \cup V(Q_{l+2})$. Hence  by Observation \ref{farthestNeighborObv}, the claim is true.  
\end{proof}

\begin{claim}
 \label{whileClaim}
The while loop in Algorithm $3.1$ terminates after a finite number of iterations. 
\end{claim}
\begin{proof}
From Observation \ref{neighborObv}, we know that each vertex has an edge (in $H'$) with both its left neighbor and right neighbor in the circular arrangement. Each time when Step $6$ of Algorithm $3.1$ is executed, a vertex $z_{s+1}$ is chosen such that $z_{s+1}$ is the farthest neighbor of $z_s$ before $z_0$. 
Since $H'$ is a finite graph, there will be a point of time in the execution of the algorithm when in Step $6$ it picks a $z_{s+1}$ such that $(z_{s+1}, z_0) \in E(H')$ . 
\end{proof}
From Claim \ref{whileClaim}, we can infer that $C'$ is a cycle. 

\begin{claim}\label{inducedcycleclaim}
$C'$ is an induced cycle in $H'$. 
\end{claim}
\begin{proof}
Suppose $C'$ is not an induced cycle. Then there exists a chord $(z_a,z_b)$ in $C'$. Since $(z_a, z_b)$  is a chord, we have $b \neq a-1$ or $b \neq a+1$.  
Let $l =\max_{l'}(H'$ has an $l'\mbox{-edge})$. 
Let $z_a \in V(Q_i)$, $z_b \in V(Q_j)$. 
We know that $\min (clock\_dist(z_a, z_b),$ $clock\_dist(z_b, z_a)) \leq l$. Without loss of generality, assume $clock\_dist(z_a, z_b) \leq l \leq 2$ (from Claim \ref{Ledgeclaim}). That is, $j-i \leq l \leq 2$ and $(z_a,z_b)$ is a $(j-i)$-edge. If $z_a = z_0$, then $z_b \neq z_1$ and the algorithm exits from the while loop, when $q=b$, thus returning a cycle $z_0 \ldots z_b z_0$. But in such a cycle $(z_b, z_0)$ is not a chord. Therefore, $z_a \neq z_0$. Similarly, $z_b \neq z_0$. 
We know that $z_{a+1} \neq z_b$, $z_{a+1} <_{z_a} z_b$, and $z_{a+1} \in V(Q_{i}) \cup V(Q_{i+1}) \cup V(Q_{i+2})$. Since $j-i \leq 2$, $z_b \in V(Q_{i}) \cup V(Q_{i+1}) \cup V(Q_{i+2})$. If $z_b <_{z_a} z_0$, then it contradicts the fact that $z_{a+1}$ is the farthest neighbor of $z_a$ before $z_0$. Therefore, $z_0 <_{z_a} z_b$. Then, either $z_b = z_1$ or $z_1 <_{z_a} z_b$. Recall that $l = \max_{l'}(H' \mbox{has an }l'\mbox{-edge})$, and $(z_0,z_1)$ is an $l$-edge with $z_0 \in V(Q_0)$ and $z_1 \in V(Q_{l})$. Since (i) $(z_a,z_b)$ is a $(j-i)$-edge, where $j-i \leq l$, (ii) $z_0 <_{z_a} z_b$, and (iii) $z_b =z_1$ or $z_1 <_{z_a} z_b$, we have $l \geq j-i = clock\_dist(z_a, z_b) \geq clock\_dist(z_0, z_b) \geq clock\_dist(z_0, z_1) = l$. Hence, $j-i = l$ and $(z_a,z_b)$ is an $l$-edge. We know that $(z_0,z_1)$ is also an $l$-edge with $z_0 \in V(Q_0)$ and $z_1 \in V(Q_l)$. Since $z_0 <_{z_a} z_b$ and $z_b = z_1$ or $z_1 <_{z_0} z_b$, we get $z_a \in V(Q_0)$ and $z_b \in V(Q_l)$. 
From Step 2 of the algorithm we know that $z_0$ is the first  vertex (in a clockwise scan) in $Q_0$ which has an $l$-edge to a vertex in $Q_l$. This implies that, since $z_0 <_{z_a} z_b$,  $z_a = z_0$ which is a contradiction. Hence we prove the claim.
\end{proof}

What is left now is to show that $q+1 \geq p$, i.e., $||C'|| \geq ||C||$, where $C' = z_0 \ldots z_qz_0$ and $C = u_0 \ldots u_{p-1}u_0$. In order to show this, we state and prove the following claims. 
\begin{claim}
\label{cycleSizeClaim}
For every $j \in \{0, \ldots , p-1\}$, $(V(Q_j) \cup V(Q_{j+1}))\cap V(C') \neq \emptyset$.  
\end{claim}
\begin{proof}
Suppose the claim is not true. Find the minimum $j$ that violates the claim. Clearly, $j \neq 0$ as $z_0 \in V(Q_0)$. We claim that $z_q \in V(Q_{j-1})$. Suppose $z_q \notin V(Q_{j-1})$. Let $a = \max \{i~|~z_i \in V(Q_{j-1})\}$ (note that, since $j \neq 0$, by the minimality of $j$, $(V(Q_{j-1}) \cup V(Q_j)) \cap V(C') \neq  \emptyset$ and therefore $V(Q_{j-1}) \cap V(C') \neq \emptyset$). Since $z_a \neq z_q$, by the maximality of $a$, we have $z_{a+1} \notin V(Q_{j-1})$. From our assumption, $(V(Q_j) \cup V(Q_{j+1}))\cap V(C') = \emptyset$ and therefore $z_{a+1} \notin V(Q_{j-1}) \cup V(Q_j) \cup V(Q_{j+1})$. Thus $z_a \neq z_q$ and $z_{a+1}$ is not the farthest neighbor of $z_a$ before $z_0$. This is a contradiction to the way $z_{a+1}$ is chosen by Algorithm $3.1$. 
Hence, $z_q \in V(Q_{j-1})$. We know that $(z_q, z_0) \in E(H')$ with $z_q \in V(Q_{j-1})$ and $z_0 \in V(Q_0)$. Since $l = \max_{l'}(H'\mbox{ has an }l'\mbox{-edge})$, we have $\min(clock\_dist(z_q,z_0),$ $clock\_dist(z_0,z_q)) \leq l$ . That is,  $j \geq p + 1-l$ or $j \leq 1+l$. As $l \leq 2$ (by Claim \ref{Ledgeclaim}), we have $j=p-1$ or $j \leq 1+l$. Since $z_0 \in V(Q_0)$, $(V(Q_{p-1}) \cup V(Q_{0}))\cap V(C') \neq \emptyset$ and hence $j \neq p-1$. Therefore, $j \leq 1+l$.  Since $z_0 \in V(Q_0)$ and $z_1 \in V(Q_{l})$ (recall $l \leq 2$), we get $j = 1 + l$. 
We know that, for every $z_a, z_b \in V(C')$, if $a<b$ then $z_a <_{z_0} z_b$. Therefore, $z_1 <_{z_0} z_q$. We have $z_1 \in V(Q_l)$. Since $j = 1+l$, we also have $z_q \in V(Q_l)$. Thus, we have $z_1, z_q \in V(Q_l)$ and $z_1 <_{z_0} z_q$. But this contradicts the fact that $z_1$ is the last vertex in $Q_l$ encountered in a clockwise scan that has $z_0$ as its neighbor. 
\end{proof}
\begin{figure}
\centerline{\input{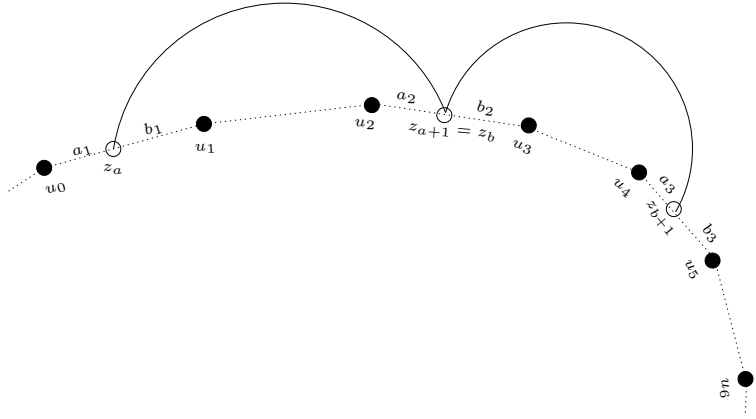}}
\caption{Figure illustrates the case when path $P$ defined in Claim \ref{0-edgeClaim} is a trivial path. The dotted lines between each $u_{i-1}$ and $u_i$ indicate the path $Q_i'$. Each continuous arc corresponds to an edge in the cycle $C' = z_0\ldots z_qz_0$.}
\label{0-edgeFig1}
\end{figure}

\begin{figure}
\centerline{\input{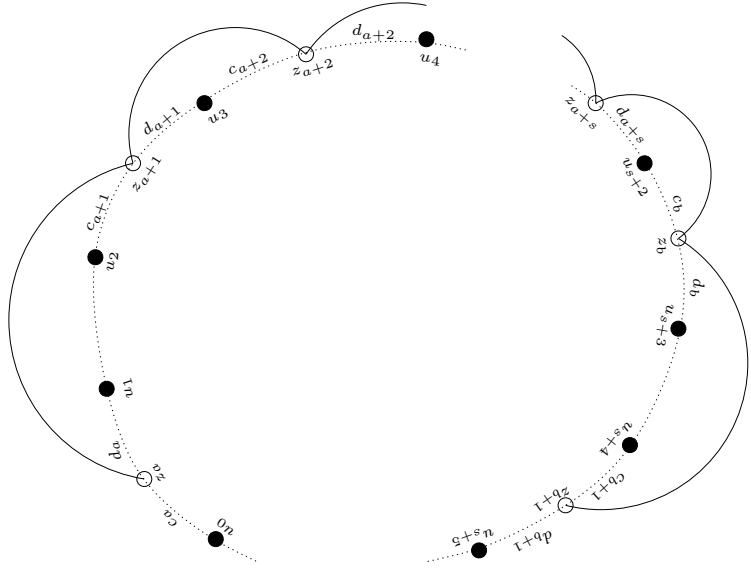}}
\caption{Figure illustrates the case when path $P$ defined in Claim \ref{0-edgeClaim} is 
$P = z_{a+1} z_{a+2}\ldots z_{a+1+s}$, where $s \geq 1$ and $z_{a+1+s} = z_b$. The dotted lines between each $u_{i-1}$ and $u_i$ indicate the path $Q_i'$. Each continuous arc corresponds to an edge in the cycle $C' = z_0\ldots z_qz_0$.}
\label{0-edgeFig2}
\end{figure}

\begin{claim}
\label{0-edgeClaim}
Let $(z_a,z_{a+1}), (z_b,z_{b+1}) \in E(C')$ be two $2$-edges, where $a < b$. Let $P$, $P'$ denote the clockwise $z_{a+1}-z_b$, $z_{b+1}-z_a$ paths respectively in $C'$.   
Both $P$ and $P'$ contain at least one $0$-edge.  
\end{claim}
\begin{proof}
Consider the path $P$ (proof is similar in the case of path $P'$). Path $P$ is a non-trivial path only if $z_{a+1} \neq z_b$. Suppose $z_{a+1} = z_b$ (see Fig. \ref{0-edgeFig1}). Let $z_a \in V(Q_f)$. 
For the sake of ease of notation, assume $f=1$ (the same proof works for any value of $f$). Let $a_1 = ||u_0Q_1'z_a||$, $b_1 = ||z_aQ_1'u_1||$, $a_2 = ||u_2Q_3'z_b||$, $b_2 = ||z_bQ_3'u_3||$, $a_3 = ||u_4Q_5'z_{b+1}||$, and $b_3 = ||z_{b+1}Q_5'u_5||$. 
We know that, for every $w_1, w_2 \in V(H'^{[m]})$ with $w_1 \in A$ and $w_2 \in B$, if $(w_1,w_2) \notin E(H'^{[m]})$ then $d_{H'}(w_1,w_2) \geq m+2$.
Since $(u_0,u_3) \notin E(H'^{[m]}),~(u_1,u_4) \notin E(H'^{[m]})$ and $(u_2,u_5) \notin E(H'^{[m]})$,  we have $a_1 + b_2 \geq m+1$, $b_1 + a_3 \geq m$, and $a_2 + b_3 \geq m+1$.  
Adding the three inequalities and by applying an easy averaging argument we can infer that either $a_1 + b_1 = ||Q_1|| > m$, $a_2 + b_2 = ||Q_{3}|| > m$, or $a_3 + b_3 =||Q_{5}|| > m$ which is a contradiction. Therefore $P$ is a non-trivial path i.e., $z_{a+1} \neq z_b$. Assume $P$ does not contain any $0$-edge. Let $P = z_{a+1} z_{a+2}\ldots z_{a+1+s}$, where $s \geq 1$, $a+1+s = b$, and $(z_{a+1},z_{a+2}) \ldots (z_{a+s},z_{a+1+s})$ are $1$-edges (see Fig. \ref{0-edgeFig2}). Since $(u_0,u_3) \notin E(H'^{[m]}),~(u_1,u_4) \notin E(H'^{[m]})$, we have $c_a+d_{a+1} \geq m+1$ and $d_{a} + d_{a+2} \geq m$ (please refer Fig. \ref{0-edgeFig2} for knowing what $c_a, d_a, \ldots ,c_{b+1},d_{b+1}$ are). Summing up the two inequalities, we get $d_{a+1} + d_{a+2} \geq 2m+1 - (c_{a} + d_a)$. We know that, for each $i \in \{0, \ldots p-1\}$, $||Q_i'|| \leq m$. Therefore, we have $c_a + d_a \leq m$. Hence, $d_{a+1} + d_{a+2} \geq m+1$. Since $(c_{a+1} + d_{a+1}) + (c_{a+2} + d_{a+2}) \leq 2m$, we get 
\begin{eqnarray}
 \label{c1c2ineq}
c_{a+1} + c_{a+2} & \leq & m-1
\end{eqnarray}

Since $(u_{s+2},u_{s+5}) \notin E(H'^{[m]}),~(u_{s+1},u_{s+4}) \notin E(H'^{[m]})$, we have, 
\begin{eqnarray*}
c_b +  d_{b+1} & \geq & m+1 \\
c_{a+s} + c_{b+1} & \geq & m
\end{eqnarray*}
Summing up the two inequalities, we get 
\begin{eqnarray*}
 c_b + c_{a+s} & \geq & 2m+1 - (c_{b+1} + d_{b+1})
\end{eqnarray*}
Since $b=a+s+1$ and  $c_{b+1} + d_{b+1} \leq m$, we get 
\begin{eqnarray}
 \label{cscs+1ineq}
c_{a+s+1} + c_{a+s} & \geq & m+1
\end{eqnarray}
 Substituting for $s=1$ in Inequality \ref{cscs+1ineq}, we get $c_{a+2} + c_{a+1} \geq m+1$. But this contradicts Inequality \ref{c1c2ineq}. Hence $s>1$. Suppose $s=2$. Since $(u_{2},u_{5}) \notin E(H'^{[m]}))$, we have $c_{a+1} + d_{a+3} \geq m$. Adding this with Inequality \ref{cscs+1ineq}, we get $c_{a+1} + c_{a+2} \geq (2m+1) - (c_{a+3} + d_{a+3}) \geq m+1$. But this contradicts Inequality \ref{c1c2ineq}. Hence $s>2$. Since $(u_{s},u_{s+3}) \notin E(H'^{[m]})), \ldots ,(u_{2},u_{5}) \notin E(H'^{[m]}))$, we have the following inequalities:-
\begin{eqnarray*}
c_{a+s-1} + d_{a+s+1} & \geq & m \\
\vdots & \vdots & \vdots \\
c_{a+1} + d_{a+3} & \geq & m
\end{eqnarray*}
Adding the above set of inequalities and applying the fact that $c_i + d_i \leq m $, $\forall i \in \{0, \ldots q\}$, we get $c_{a+1} + c_{a+2} + d_{a+s} + d_{a+s+1} \geq 2m$. Adding this with Inequality \ref{cscs+1ineq}, we get $c_{a+1} + c_{a+2} \geq (3m+1) - (c_{a+s+1} + d_{a+s+1}) - (c_{a+s} + d_{a+s})  \geq m+1$. But this contradicts Inequality \ref{c1c2ineq}. Hence we prove the claim. 
\end{proof}

\begin{claim}
\label{pathContribClaim}
For every $j,j' \in \{0, \ldots, p-1\}$, where $j < j'$ and $(V(Q_j) \cup V(Q_{j'}))\cap V(C') = \emptyset$, there exist $i, i' \in \{0, \ldots, p-1\}$, where only $i$ satisfies $j < i < j'$, such that $|V(Q_{i})\cap V(C')| \geq 2$ and $|V(Q_{i'})\cap V(C')| \geq 2$.  
\end{claim}
\begin{proof}
By Claim \ref{cycleSizeClaim}, (i) $j' \neq j+1$ or $j' \neq j-1$, and  (ii) there exist $r,r' \in \{0, \ldots , q\}$ such that $(z_{r}, z_{r+1})$ is a $2$-edge with its endpoints on $Q_{j-1}$ and $Q_{j+1}$ and  $(z_{r'}, z_{r'+1})$ is a $2$-edge with its endpoints on $Q_{j'-1}$ and $Q_{j'+1}$. By Claim \ref{0-edgeClaim}, we know that if $P$, $P'$ denote the clockwise $z_{r+1}- z_{r'}$, $z_{r'+1}-z_r$ paths respectively in $C'$, then both $P$ and $P'$ contains at least one $0$-edge. This proves the claim. 
\end{proof}

In order to show that the size of cycle $C'$ ($= z_0 \ldots z_qz_0$) is at least $p$, we consider the following three cases:- \vspace{0.05in} \\ 
\emph{case $|\{Q_j \in \{Q_0 \ldots Q_{p-1}\}~|~V(Q_j) \cap V(C') = \emptyset \}| = 0$}: In this case, for every $j \in \{0, \ldots p-1\}$, $Q_j$ contributes to $V(C')$ and therefore $||C'|| \geq p = ||C||$. \vspace{0.05in} \\
\emph{case $|\{Q_j \in \{Q_0 \ldots Q_{p-1}\}~|~V(Q_j) \cap V(C') = \emptyset \}| = 1$}: Let $Q_j$ be that only path (among $Q_0 \ldots Q_{p-1}$) that does not contribute to $V(C')$. Then we claim that there exists a $Q_{j'}$, where $j' \neq j$, such that  $V(C') \cap V(Q_{j'}) \geq 2$. Suppose the claim is not true then it is easy to see that $||C'|| = p-1$ which is an odd number thus contradicting the bipartitedness of $H'$. Hence the claim is true. Now, by applying the claim it is easy to see that $||C'|| = \sum_{j}|V(C') \cap V(Q_j)| \geq p = ||C||$. \vspace{0.05in} \\
\emph{case $|\{Q_j \in \{Q_0 \ldots Q_{p-1}\}~|~V(Q_j) \cap V(C') = \emptyset \}| > 1$}: Scan vertices of $H'$ starting from any vertex in clockwise direction. Claim \ref{pathContribClaim} ensures that between every $Q_j$ and $Q_{j'}$, which do not contribute to $V(C')$, encountered there exists a $Q_i$ which compensates by contributing at least two vertices to $V(C')$. Therefore, $||C'|| \geq p = ||C||$. 
\hfill \bbox
\end{proof}

\end{document}